\theoremstyle{plain}
\newtheorem{theorem}                 {Theorem}      [section]
\newtheorem{proposition}  [theorem]  {Proposition}
\newtheorem{lemma}        [theorem]  {Lemma}
\theoremstyle{definition}
\newtheorem{example}      [theorem]  {Example}
\newtheorem{definition}   [theorem]  {Definition}
\numberwithin{equation}{section}
\def \theo-intro#1#2 {\vskip .25cm\noindent{\bf Theorem #1\ }{\it #2}}
\def \rn{\mathbb R}
\def \F{\mathcal F}
\def \H{\mathcal H}
\def \V{\mathcal V}
\def \ip #1#2{\langle #1,#2 \rangle}
\def \lb#1#2{[#1,#2]}
\def \g{\mathfrak{g}}
\def \nab#1#2{\hbox{$\nabla$\kern -.3em\lower 1.0 ex
    \hbox{$#1$}\kern -.1 em {$#2$}}}
\begin{document}
\baselineskip 22pt \larger

\allowdisplaybreaks

\title
{Holomorphic harmonic morphisms from cosymplectic
almost Hermitian manifolds}

\author{Sigmundur Gudmundsson}

\keywords
{harmonic morphisms, holomorphic, cosymplectic}

\subjclass[2010]
{58E20, 53C43, 53C12}

\dedicatory{version 2.017 - 3 September 2014}

\address
{Department of Mathematics, Faculty of Science, Lund University,
Box 118, S-221 00 Lund, Sweden}

\email
{Sigmundur.Gudmundsson@math.lu.se}

\begin{abstract}
We study 4-dimensional orientable Riemannian manifolds equipp\-ed
with a minimal and conformal foliation $\F$ of codimension 2.
We prove that the two adapted almost Hermitian structures
$J_1$ and $J_2$ are both cosymplectic if and only if $\F$
is Riemannian and its horizontal distribution $\H$ is
integrable.
\end{abstract}

\maketitle

\section{Introduction}

The notion of a minimal submanifold of a given ambient space is of
great importance in differential geometry. Harmonic morphisms
$\phi:(M,g)\to(N,h)$ between Riemannian manifolds are useful
tools for the construction of such objects. They are solutions to
over-determined non-linear systems of partial differential
equations determined by the geometric data of the manifolds
involved. For this reason harmonic morphisms are difficult to find
and have no general existence theory, not even locally.

For the existence of harmonic morphisms $\phi:(M,g)\to (N,h)$ it
is an advantage that the target manifold $N$ is a surface i.e. of
dimension $2$. In this case the problem is invariant under
conformal changes of the metric on $N^2$. Therefore, at least for
local studies, the codomain can be taken to be the complex plane
with its standard flat metric.

In this paper we are interested in 4-dimensional orientable Riemannian
manifolds $(M^4,g)$ equipp\-ed with a minimal and conformal foliation
$\F$ of codimension 2.
These are important since they produce local complex-valued
harmonic morphisms on $M$, see Section \ref{morphisms-foliations}.
Our following main result, gives a new relationship between the
geometry of the foliation $\F$ and the cosymplecticity
of both its adapted almost Hermitian structures.

\begin{theorem}\label{theo-main-1}
Let $(M^4,g)$ be a 4-dimensional orientable Riemannian manifold equipp\-ed
with a minimal and conformal foliation $\F$ of codimension 2.  Then the
corresponding adapted almost Hermitian structures $J_1$ and $J_2$ are both
cosymplectic if and only if $\F$ is Riemannian and its horizontal
distribution $\H$ is integrable.
\end{theorem}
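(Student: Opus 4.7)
The plan is to pick a local oriented orthonormal frame $\{X_1,X_2,X_3,X_4\}$ with $\{X_1,X_2\}$ spanning $\V$ and $\{X_3,X_4\}$ spanning $\H$, compatible with the induced orientations. Writing $\{\theta^1,\theta^2,\theta^3,\theta^4\}$ for the dual coframe, the two adapted K\"ahler forms are
\[
\omega_1=\theta^1\wedge\theta^2+\theta^3\wedge\theta^4,\qquad
\omega_2=\theta^1\wedge\theta^2-\theta^3\wedge\theta^4.
\]
In dimension $4$ the first is self-dual and the second anti-self-dual for the Hodge star, so the codifferentials $\delta\omega_k=-*d*\omega_k$ vanish exactly when $d\omega_k=0$. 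The problem thus reduces to showing that, under minimality and conformality of $\F$, the two $3$-forms $d(\theta^1\wedge\theta^2)$ and $d(\theta^3\wedge\theta^4)$ vanish simultaneously if and only if $\F$ is Riemannian and $\H$ is integrable.

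My next step is to expand these two $3$-forms in the basis $\{\theta^{123},\theta^{124},\theta^{134},\theta^{234}\}$ using $d\theta^i(X_j,X_k)=-\theta^i([X_j,X_k])$ together with the Koszul formula. The integrability of $\V$, built into the foliation hypothesis, immediately eliminates the $\theta^{12}$-components of $d\theta^3$ and $d\theta^4$, and a direct calculation gives
\[
d(\theta^3\wedge\theta^4)=-\sum_{a=1,2}g\bigl(X_a,\,\nabla_{X_3}X_3+\nabla_{X_4}X_4\bigr)\,\theta^a\wedge\theta^3\wedge\theta^4.
\]
Similarly, $d(\theta^1\wedge\theta^2)$ decomposes into two pieces: one set of $\theta^1\wedge\theta^2\wedge\theta^b$-terms with coefficients $g(X_b,\nabla_{X_1}X_1+\nabla_{X_2}X_2)$ for $b=3,4$, and one set of $\theta^a\wedge\theta^3\wedge\theta^4$-terms built from $g(X_a,[X_3,X_4])$ for $a=1,2$.

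Now I want to identify each of these coefficients with a geometric invariant of $\F$. The vector $(\nabla_{X_1}X_1+\nabla_{X_2}X_2)^{\H}$ is the mean curvature $\mu^{\F}$ of the leaves of $\F$, so its contribution to $d(\theta^1\wedge\theta^2)$ vanishes by minimality. The vector $[X_3,X_4]^{\V}$ measures the failure of $\H$ to be integrable. Finally, a short computation with the Lie derivative yields the identity
\[
g\bigl(X_a,\,\nabla_{X_3}X_3+\nabla_{X_4}X_4\bigr)=-\tfrac{1}{2}\,\trace_{\H}(\L_{X_a}g);
\]
under the conformality assumption $(\L_{X_a}g)|_{\H}=\lambda_a\,g|_{\H}$, this trace equals $2\lambda_a$, so its vanishing for $a=1,2$ is equivalent to $\lambda_a\equiv 0$, i.e.\ to $\F$ being Riemannian. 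Plugging these identifications back into the two formulas gives the stated equivalence.

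The part I expect to take the most care is the middle step: keeping the signs straight and verifying that the three geometric invariants (mean curvature $\mu^{\F}$, horizontal bracket $[X_3,X_4]^{\V}$, and the ``dual'' mean curvature $(\nabla_{X_3}X_3+\nabla_{X_4}X_4)^{\V}$) appear in the expansions of $d(\theta^1\wedge\theta^2)$ and $d(\theta^3\wedge\theta^4)$ with no unexpected cross-coupling, so that the minimal and conformal hypotheses really kill exactly the two contributions they should. A convenient sanity check is that the conclusions ``$\F$ Riemannian'' and ``$\H$ integrable'' are manifestly independent of the chosen adapted frame, as are $d\omega_1=0$ and $d\omega_2=0$, so the verification may be performed pointwise in a frame tailored to simplify the bookkeeping.
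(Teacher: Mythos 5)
Your proposal is correct, but it follows a genuinely different route from the paper. The paper never leaves the level of the divergence $\delta J_k$: it expands $\delta J_k=[X,Y]+(-1)^k[W,Z]-J_k(\nab XX+\nab YY+\nab ZZ+\nab WW)$ in an adapted frame, compares the vertical parts of $\delta J_1$ and $\delta J_2$ to extract $\V[X,Y]=0$ and $\V(\nab ZZ+\nab WW)$-type conditions, and finishes with the polar identity to pass from vanishing trace of $B^\H$ to $B^\H\equiv 0$. You instead pass to the K\"ahler $2$-forms, use that in dimension four $\omega_1$ is self-dual and $\omega_2$ anti-self-dual so that $\delta\omega_k=0$ is equivalent to $d\omega_k=0$, and then read the three relevant invariants (mean curvature of $\V$, the vertical part of $[X_3,X_4]$, and the trace of $B^\H$) off the components of $d(\theta^1\wedge\theta^2)$ and $d(\theta^3\wedge\theta^4)$; I checked the coefficient formulas you state and they are right, and the Lie-derivative characterization of conformality does the job the polar identity does in the paper. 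Your approach buys conceptual transparency: cosymplectic becomes almost K\"ahler, the two conditions ``$\F$ Riemannian'' and ``$\H$ integrable'' appear as visibly decoupled components of two $3$-forms, and the frame-independence remark makes the pointwise verification clean. The paper's connection-level computation is more elementary and self-contained, and its explicit formula for $\delta J_k$ is reused verbatim in the Lie-group examples via the Koszul formula, which your formulation would not provide directly. The only step you should make explicit is the standard identity identifying the divergence of $J$ with the codifferential of the K\"ahler form, $\delta\omega(E)=g(\delta J,E)$ (a one-line consequence of $\nabla g=0$ and the skew-adjointness of $\nab EJ$), since the paper's definition of cosymplectic is $\delta J=0$ rather than $\delta\omega=0$.
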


For the general theory of harmonic morphisms between Riemannian
manifolds we refer to the excellent book \cite{Bai-Woo-book}
and the regularly updated on-line bibliography \cite{Gud-bib}.

\section{Harmonic morphisms and minimal conformal
foliations}\label{morphisms-foliations}

Let $M$ and $N$ be two manifolds of dimensions $m$ and $n$,
respectively. A Riemannian metric $g$ on $M$ gives rise to the
notion of a {\it Laplacian} on $(M,g)$ and real-valued {\it
harmonic functions} $f:(M,g)\to\rn$. This can be generalized to
the concept of {\it harmonic maps} $\phi:(M,g)\to (N,h)$ between
Riemannian manifolds, which are solutions to a semi-linear system
of partial differential equations, see \cite{Bai-Woo-book}.

\begin{definition}
  A map $\phi:(M,g)\to (N,h)$ between Riemannian manifolds is
  called a {\it harmonic morphism} if, for any harmonic function
  $f:U\to\rn$ defined on an open subset $U$ of $N$ with $\phi^{-1}(U)$
non-empty,
  $f\circ\phi:\phi^{-1}(U)\to\rn$ is a harmonic function.
\end{definition}

The following characterization of harmonic morphisms between
Riemannian manifolds is due to Fuglede and T. Ishihara.  For the
definition of horizontal (weak) conformality we refer to
\cite{Bai-Woo-book}.

\begin{theorem}\cite{Fug-1,T-Ish}
  A map $\phi:(M,g)\to (N,h)$ between Riemannian manifolds is a
  harmonic morphism if and only if it is a horizontally (weakly)
  conformal harmonic map.
\end{theorem}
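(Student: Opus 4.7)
The plan is to work in a positively oriented adapted local orthonormal frame $\{V_1, V_2, H_1, H_2\}$ with $V_1, V_2 \in \V$, $H_1, H_2 \in \H$, and dual coframe $\{\alpha^1, \alpha^2, \beta^1, \beta^2\}$. The two adapted almost Hermitian structures agree on $\V$ and differ by the choice of orientation on $\H$, so their K\"ahler 2-forms can be written as $\omega_{\pm} = \alpha^1 \wedge \alpha^2 \pm \beta^1 \wedge \beta^2$. These forms are respectively self-dual and anti-self-dual with respect to the orientation of $M^4$, and on a 4-manifold the identity $\delta = -\star d\star$ gives $\delta\omega_\pm = 0 \iff d\omega_\pm = 0$. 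Hence the cosymplecticity of both $J_1$ and $J_2$ is equivalent to the pair of closedness conditions
\[
d(\alpha^1 \wedge \alpha^2) = 0 \quad \text{and} \quad d(\beta^1 \wedge \beta^2) = 0.
\]
The remainder of the argument aims to show that these correspond exactly to $\H$ being integrable and $\F$ being Riemannian.

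For the first condition, I would expand $d(\alpha^1 \wedge \alpha^2)$ in the natural basis of $3$-forms by means of $d\alpha^i(X,Y) = -\alpha^i([X,Y])$. A direct computation shows that the coefficients of $\alpha^1 \wedge \alpha^2 \wedge \beta^A$ equal $-g(H_A,\, \nabla_{V_1}V_1 + \nabla_{V_2}V_2)$, i.e. the $\H$-projections of the mean curvature vector of the leaves of $\F$, which vanish by the minimality hypothesis. The surviving coefficients of $\alpha^i \wedge \beta^1 \wedge \beta^2$ reduce (up to sign) to $\alpha^i([H_1, H_2])$, so closedness of $\alpha^1 \wedge \alpha^2$ is equivalent to $[H_1, H_2] \in \H$, i.e. to the integrability of $\H$.

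For the second condition, $\V$ is integrable as the tangent distribution to $\F$, so an analogous computation yields
\[
d(\beta^1 \wedge \beta^2) = -\sum_{i=1}^{2} g\!\left(V_i,\, \bigl(\nabla_{H_1}H_1 + \nabla_{H_2}H_2\bigr)^\V\right)\, \alpha^i \wedge \beta^1 \wedge \beta^2,
\]
so $d(\beta^1 \wedge \beta^2) = 0$ iff the vertical component of the horizontal mean curvature vanishes. Representing $\F$ locally as the fibers of a horizontally conformal submersion $\pi\colon M \to N^2$ with dilation $\lambda$, the horizontal 2-form satisfies $\beta^1 \wedge \beta^2 = \lambda^2 \pi^*\omega_N$; since $\omega_N$ is a top-degree form on $N^2$, we obtain $d(\beta^1 \wedge \beta^2) = 2\lambda^{-1} d\lambda \wedge \beta^1 \wedge \beta^2$, and only the vertical part of $d\lambda$ survives the wedge. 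Consequently $d(\beta^1 \wedge \beta^2) = 0$ iff $V(\lambda) = 0$ for every $V \in \V$, which is precisely the Riemannian condition for the conformal foliation $\F$.

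The main obstacle is the geometric identification in the second step, i.e. tying the vanishing of $(\nabla_{H_1}H_1 + \nabla_{H_2}H_2)^\V$ to the Riemannian property: this passes through the local horizontally conformal submersion picture and the formula expressing the horizontal mean curvature as a multiple of the vertical gradient of $\ln\lambda$. A secondary point requiring care is verifying the self-dual/anti-self-dual claim for $\omega_\pm$ by ensuring that the orientations chosen to define $J_1$ and $J_2$ are compatible with the orientation of $M^4$; once this is done, combining the two closedness equivalences yields the stated characterisation.
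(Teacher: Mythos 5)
Your proposal does not address the statement in question. The statement is the Fuglede--Ishihara theorem: a map $\phi:(M,g)\to(N,h)$ between arbitrary Riemannian manifolds is a harmonic morphism (i.e.\ pulls back germs of harmonic functions to harmonic functions) if and only if it is a horizontally weakly conformal harmonic map. What you have written is instead an argument for the paper's main Theorem \ref{theo-main-1}: the equivalence, on a $4$-manifold carrying a minimal conformal foliation of codimension $2$, between the cosymplecticity of both adapted almost Hermitian structures $J_1,J_2$ and the conditions that $\F$ be Riemannian with $\H$ integrable. None of the objects in your argument --- the adapted frame, the K\"ahler forms $\omega_\pm$, self-duality, the foliation $\F$ --- occurs in the statement you were asked to prove, which concerns a single map between manifolds of arbitrary dimensions with no almost Hermitian structure in sight. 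Note also that the paper offers no proof of this theorem; it is quoted from Fuglede and Ishihara, so there is no internal argument to compare yours against.

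A proof of the actual statement runs along entirely different lines. One uses the composition law
\[
\Delta(f\circ\phi)\;=\;df(\tau(\phi))\;+\;\sum_{\alpha,\beta}(\nabla df)_{\alpha\beta}\, g(\nabla\phi^\alpha,\nabla\phi^\beta),
\]
where $\tau(\phi)$ is the tension field and $(\nabla df)_{\alpha\beta}$ the Hessian of $f$ in local coordinates $(y^\alpha)$ on $N$, together with the local existence on $N$ of harmonic functions with arbitrarily prescribed first derivatives and trace-free second derivatives at a point. Demanding $\Delta(f\circ\phi)=0$ for all such $f$ forces both $\tau(\phi)=0$ (harmonicity) and $g(\nabla\phi^\alpha,\nabla\phi^\beta)=\lambda^2\, h^{\alpha\beta}$ (horizontal weak conformality); the converse is immediate from the same identity. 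As a separate remark, your self-dual/anti-self-dual $2$-form computation is a reasonable alternative route to Theorem \ref{theo-main-1}, differing from the paper's direct computation of $\delta J_1+\delta J_2$ and $\delta J_1-\delta J_2$ in an adapted frame, but that is not the theorem at hand.
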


Let $(M,g)$ be a Riemannian manifold, $\V$ be an involutive
distribution on $M$ and denote by $\H$ its orthogonal
complement distribution on $M$.
As customary, we also use $\V$ and $\H$ to denote the
orthogonal projections onto the corresponding subbundles of $TM$
and denote by $\F$ the foliation tangent to
$\V$. The second fundamental form for $\V$ is given by
$$B^\V(U,V)=\frac 12\H(\nabla_UV+\nabla_VU)\qquad(U,V\in\V),$$
while the second fundamental form for $\H$ satisfies
$$B^\H(X,Y)=\frac{1}{2}\V(\nabla_XY+\nabla_YX)\qquad(X,Y\in\H).$$
The foliation $\F$ tangent to $\V$ is said to be {\it conformal} if there is a
vector field $V\in \V$ such that $$B^\H=g\otimes V,$$ and
$\F$ is said to be {\it Riemannian} if $V=0$.
Furthermore, $\F$ is said to be {\it minimal} if $\text{trace}\ B^\V=0$ and
{\it totally geodesic} if $B^\V=0$. This is equivalent to the
leaves of $\F$ being minimal and totally geodesic submanifolds
of $M$, respectively.

It is easy to see that the fibres of a horizontally conformal
map (resp.\ Riemannian submersion) give rise to a conformal foliation
(resp.\ Riemannian foliation). Conversely, the leaves of any
conformal foliation (resp.\ Riemannian foliation) are
locally the fibres of a horizontally conformal map
(resp.\ Riemannian submersion), see \cite{Bai-Woo-book}.

The next result of Baird and Eells gives the theory of
harmonic morphisms, with values in a surface,
a strong geometric flavour.

\begin{theorem}\cite{Bai-Eel}\label{theo:B-E}
Let $\phi:(M^m,g)\to (N^2,h)$ be a horizontally conformal
submersion from a Riemannian manifold to a surface. Then $\phi$ is
harmonic if and only if $\phi$ has minimal fibres.
\end{theorem}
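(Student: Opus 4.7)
The plan is to compute the tension field $\tau(\phi) = \operatorname{trace}_g \nabla d\phi$ in a local orthonormal frame adapted to the orthogonal splitting $TM = \V \oplus \H$, where $\V = \ker d\phi$ and $\H = \V^\perp$, and then to exploit the coincidence that $\dim N = 2$. Pick locally orthonormal vector fields $\{v_1,\dots,v_{m-2}\}$ spanning $\V$ and $\{h_1,h_2\}$ spanning $\H$, so that
\[
\tau(\phi) \;=\; \sum_{j=1}^{m-2}\nabla d\phi(v_j,v_j) \;+\; \sum_{k=1}^{2}\nabla d\phi(h_k,h_k).
\]

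For the vertical sum, since $d\phi(v_j)\equiv 0$, the standard identity $\nabla d\phi(X,Y) = \nabla^\phi_X d\phi(Y) - d\phi(\nabla_X Y)$ collapses to $\nabla d\phi(v_j,v_j) = -d\phi(\nabla_{v_j}v_j)$. The $\V$-component of $\nabla_{v_j}v_j$ lies in $\ker d\phi$, so only the horizontal part contributes, and summing over $j$ gives
\[
\sum_{j=1}^{m-2}\nabla d\phi(v_j,v_j) \;=\; -d\phi\Bigl(\sum_j \H(\nabla_{v_j}v_j)\Bigr) \;=\; -d\phi(\operatorname{trace} B^\V).
\]
Thus the vertical contribution to $\tau(\phi)$ is precisely the image under $d\phi$ of the mean curvature of the fibres of $\phi$, up to a sign.

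For the horizontal sum, I would use horizontal conformality: writing $\lambda$ for the dilation of $\phi$, the vectors $e_k := \lambda^{-1}d\phi(h_k)$ form a local orthonormal frame of $\phi^{-1}TN$. Expanding $\nabla d\phi(h_k,h_k)$ by means of the pull-back connection and differentiating the relation $h(e_k,e_l)=\delta_{kl}$ to extract the antisymmetric connection one-forms, the classical Baird--Eells computation gives
\[
\sum_{k=1}^{n}\nabla d\phi(h_k,h_k) \;=\; -(n-2)\,d\phi(\operatorname{grad}_\H\ln\lambda).
\]
The decisive feature is the factor $(n-2)$: when $n=2$ the entire horizontal contribution vanishes identically, so in our setting
\[
\tau(\phi) \;=\; -d\phi(\operatorname{trace} B^\V).
\]

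To finish, note that $\operatorname{trace} B^\V$ is a section of $\H$ by the very definition of $B^\V$, and since $\phi$ is a submersion, the restriction $d\phi|_\H \colon \H \to TN$ is a linear isomorphism at every point. Therefore $d\phi(\operatorname{trace} B^\V)=0$ if and only if $\operatorname{trace} B^\V = 0$, and recalling that $\phi$ is harmonic exactly when $\tau(\phi)=0$ while the fibres are minimal exactly when $\operatorname{trace} B^\V = 0$, the claimed equivalence follows. The main obstacle is the horizontal calculation producing the sharp coefficient $(n-2)$: it is a careful but standard manipulation with the pull-back connection and the antisymmetry of connection forms in an orthonormal frame, requiring no ingredient beyond horizontal conformality and the torsion-freeness of the Levi-Civita connection.
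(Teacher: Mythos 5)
The paper offers no proof of this statement: it is quoted verbatim from Baird--Eells \cite{Bai-Eel}, so there is nothing internal to compare against. Your argument is the standard one behind that citation, namely the fundamental equation for horizontally conformal maps: splitting $\tau(\phi)$ along $TM=\V\oplus\H$, identifying the vertical contribution with $-d\phi(\trace B^{\V})$, and observing that the horizontal contribution carries the factor $(n-2)$ and hence dies precisely when the target is a surface; the conclusion then follows because $d\phi|_{\H}$ is an isomorphism for a submersion and $\trace B^{\V}$ is horizontal. This is correct, including the signs and the use of the submersion hypothesis to avoid critical points of the dilation $\lambda$. The only caveat is that the one genuinely nontrivial step --- the computation of $\sum_k\nabla d\phi(h_k,h_k)$ producing the coefficient $(n-2)$ --- is asserted rather than carried out; as you say, it is a routine manipulation with the pull-back connection and the skew-symmetry of the connection forms of the frame $e_k=\lambda^{-1}d\phi(h_k)$, but a self-contained proof would need those few lines written out.
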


\section{cosymplectic almost Hermitian structures}\label{section-3}

An almost Hermitian manifold $(M,g,J)$ is said to be {\it cosymplectic}
if its almost complex structure $J$ is {\it divergence-free} i.e.
$$\delta J_k=\text{div}J=\sum_{k=1}^m(\nab {X_k}{J})(X_k)=0,$$
where $\{X_1,\dots ,X_m\}$ is any local orthonormal frame for the
tangent bundle $TM$ of $M$.  As an application of a well-known result
from \cite{L} of A. Lichnerowicz, we have the following useful result.

\begin{proposition}\label{prop-surf-1}\cite{Gud-Woo-2}
Let $\phi:(M,g,J)\to N$ be a holomorphic map from an almost Hermitian
manifold to a Riemann surface.  Then $\phi$ is a harmonic morphism if
and only if $d\phi(J\delta J)=0$.
\end{proposition}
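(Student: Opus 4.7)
The plan is to combine the Fuglede--Ishihara characterization of harmonic morphisms with Lichnerowicz's tension-field formula for holomorphic maps. First I would observe that $\phi$ is automatically horizontally (weakly) conformal: since $d\phi\circ J = J'\circ d\phi$, the vertical distribution $\V=\ker d\phi$ is $J$-invariant, hence so is its orthogonal complement $\H$. As $N$ has real dimension $2$, $\H$ has rank $2$ at regular points, and a $J$-orthonormal frame $\{X,JX\}$ of $\H_p$ is sent by $d\phi$ to $\{d\phi(X),J'd\phi(X)\}$, which is automatically orthogonal and of equal length. Hence $d\phi|_{\H_p}$ is conformal, and this extends in the weak sense to critical points. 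By the Fuglede--Ishihara theorem, it therefore remains only to prove that $\phi$ is harmonic if and only if $d\phi(J\delta J)=0$.

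Next I would note that the codomain $(N,h,J')$ is automatically K\"ahler: in real dimension $2$ any almost complex structure is integrable, and the fundamental $2$-form of $J'$ is a top-degree form on $N$, so it is closed. Consequently $\nabla J' = 0$ on $N$, and in particular $\delta J' = 0$.

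The decisive step is Lichnerowicz's formula \cite{L}: for a holomorphic map $\phi:(M,g,J)\to(N,h,J')$ between almost Hermitian manifolds, the tension field decomposes as a domain contribution proportional to $d\phi(J\delta J)$ plus a target contribution assembled from $\nabla J'$ composed with $d\phi$. Since $\nabla J'=0$ on $N$ by the previous paragraph, the formula collapses to $\tau(\phi)=d\phi(J\delta J)$ (up to the normalization convention), so harmonicity of $\phi$ is equivalent to $d\phi(J\delta J)=0$. Combined with the horizontal conformality from the first paragraph, this yields the stated equivalence.

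The main obstacle is recalling Lichnerowicz's identity in its almost Hermitian (not necessarily K\"ahler) form and getting the normalization constant right; the remaining ingredients -- horizontal conformality of a holomorphic map to a surface, and the fact that every $2$-dimensional almost Hermitian manifold is K\"ahler -- are elementary.
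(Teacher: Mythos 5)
Your argument is correct and follows exactly the route the paper points to: Proposition \ref{prop-surf-1} is not proved in the paper but quoted from \cite{Gud-Woo-2} as an application of Lichnerowicz's result, and that proof is precisely your combination of automatic horizontal (weak) conformality of a holomorphic map to a surface, the fact that a 2-dimensional almost Hermitian target is K\"ahler so the target terms in Lichnerowicz's tension-field identity vanish, leaving $\tau(\phi)$ proportional to $d\phi(J\delta J)$, and the Fuglede--Ishihara characterization. So this is essentially the same approach as the source the paper relies on, with no gaps of substance.
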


In the light of the above discussion, the result of Proposition 
\ref{prop-surf-1} has an equivalent formulation in terms of foliations.

\begin{proposition}\label{prop-surf-2}
Let $(M,g,J)$ be an almost Hermitian manifold and $\F$ be a holomorphic
minimal conformal foliation on $M$ of codimension 2.  Then $\F$ produces 
harmonic morphisms on $M$ if and only if the divergence $\delta J$ of
the almost Hermitian structure $J$ is vertical i.e. $\H\delta J=0$.
\end{proposition}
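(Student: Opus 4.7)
The plan is to reduce Proposition~\ref{prop-surf-2} to Proposition~\ref{prop-surf-1} by passing from the foliation $\F$ to the horizontally conformal submersions it locally produces. First I would invoke the discussion of Section~\ref{morphisms-foliations}: because $\F$ is a minimal conformal foliation of codimension $2$, every point of $M$ has a neighbourhood $U$ on which $\F|_U$ is realised as the family of fibres of a horizontally conformal submersion $\phi\colon U\to N^2$ with minimal fibres. The holomorphicity assumption on $\F$ is precisely that the vertical distribution $\V$ is $J$-invariant, and hence so is its orthogonal complement $\H$; thus $J|_{\H}$ descends to an almost complex structure on $N^2$, turning $N^2$ into a Riemann surface with respect to which $\phi$ becomes a holomorphic map. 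The phrase ``$\F$ produces harmonic morphisms'' is then taken to mean: each such local submersion $\phi$ is a harmonic morphism.

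Next I would apply Proposition~\ref{prop-surf-1} to this local model. It gives the characterisation that the holomorphic submersion $\phi$ is a harmonic morphism if and only if $d\phi(J\delta J)=0$ on $U$. The remaining task is to convert this pointwise identity for $\phi$ into the intrinsic geometric condition $\H\delta J=0$ on $(M,g,J)$.

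For this, I would decompose
$$\delta J=\V\delta J+\H\delta J,$$
and use the $J$-invariance of $\V$ and $\H$ to conclude that $J(\V\delta J)\in\V$ and $J(\H\delta J)\in\H$. Since $\V$ is exactly the kernel of $d\phi$, the vertical contribution disappears, leaving $d\phi(J\delta J)=d\phi\bigl(J(\H\delta J)\bigr)$. Horizontal conformality of $\phi$ makes $d\phi|_{\H}$ injective, and $J|_{\H}$ is an isomorphism of $\H$, so $d\phi(J\delta J)=0$ is equivalent to $\H\delta J=0$.

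The main, though modest, point to justify is that the locally constructed submersion $\phi$ really is holomorphic with respect to the descended complex structure on $N^2$, so that Proposition~\ref{prop-surf-1} genuinely applies; once this is in hand, both implications are immediate from the decomposition above. Finally, since $\H\delta J$ is a globally defined tensorial object on $(M,g,J)$, the equivalence established on every sufficiently small neighbourhood yields the intrinsic statement of the proposition on all of $M$.
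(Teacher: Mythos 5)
Your argument is correct and is essentially the paper's own route: the paper gives no separate proof of Proposition \ref{prop-surf-2}, presenting it as an immediate reformulation of Proposition \ref{prop-surf-1} via the local correspondence (Section \ref{morphisms-foliations}) between minimal conformal foliations of codimension $2$ and horizontally conformal submersions $\phi$ with minimal fibres, followed by exactly your decomposition $\delta J=\V\delta J+\H\delta J$ with $\ker d\phi=\V$. The point you flag about the descended complex structure is fine and only fills in a detail the paper leaves implicit: since $J|_{\H}$ is a $g$-orthogonal complex structure on the rank-$2$ bundle $\H$ and $d\phi|_{\H}$ is conformal, the induced structure on $T N^2$ lies in a two-point set at each point and is therefore constant along connected fibres, so it is well defined and $\phi$ is holomorphic with respect to it.
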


We will now assume that $(M^4,g)$ is a 4-dimensional orientable Riemannian
manifold equipped with a minimal and conformal foliation $\F$ of
codimension 2.  Then there exist, up to sign, exactly two almost
Hermitian structure $J_1$ and $J_2$ on $M$ which are adapted to the
orthogonal decomposition $TM=\V\oplus\H$
of the tangent bundle of $M$.  They are determined by
$$J_1X=Y,\ J_1Y=-X,\ J_1Z=W,\ J_1W=-Z,$$
$$J_2X=Y,\ J_2Y=-X,\ J_2Z=-W,\ J_2W=Z,$$
where $\{X,Y,Z,W\}$ is any local orthonormal frame for the tangent
bundle $TM$ of $M$ such that $X,Y\in\H$ and $Z,W\in\V$, respectively.

We are now ready to prove our main result stated in Theorem \ref{theo-main-1}.

\begin{proof}
Let us assume that the almost complex structures $J_1$ and
$J_2$ are both cosymplectic i.e. for $k=1,2$ we have
\begin{eqnarray*}
0&=&\delta J_k\\
&=&(\nab X{J_k})(X)+(\nab Y{J_k})(Y)+(\nab Z{J_k})(Z)+(\nab W{J_k})(W)\\
&=&\lb XY+(-1)^k\lb WZ-J_k(\nab XX+\nab YY+\nab ZZ+\nab WW).
\end{eqnarray*}
It now follows from Proposition \ref{prop-surf-2} that
\begin{eqnarray*}
0&=&\delta J_1+\delta J_2\\
&=&\V\delta J_1+\V\delta J_2\\
&=&2\V [X,Y]-\V(J_1+J_2)(\nab XX+\nab YY+\nab ZZ+\nab WW)\\
&=&2\V [X,Y].
\end{eqnarray*}
This shows that the horizontal distribution $\H$ is integrable.
Then employing the fact that $J_1$ is cosymplectic, we see that
\begin{eqnarray*}
& &J_1\V (\nab XX+\nab YY)\\
&=&\V J_1(\nab XX+\nab YY)\\
&=&-\V\lb WZ-\V J_1(\nab ZZ+\nab WW)\\
&=&-\ip{\nab WZ}WW+\ip{\nab ZW}ZZ-J_1\V (\nab ZZ+\nab WW)\\
&=&\ip Z{\nab WW}W-\ip W{\nab ZZ}Z-J_1(\ip{\nab ZZ}WW+\ip{\nab WW}ZZ)\\
&=&0.
\end{eqnarray*}
Further it follows from $\V[X,Y]=0$ and $\V(\nab XX+\nab YY)=0$ that
$\V\delta J_2=0$ is equivalent to
$$\V \lb WZ-\V J_2(\nab ZZ+\nab WW)=0.$$
The fact that $\F$ is conformal implies that for each $X\in\H$
\begin{eqnarray*}
2B^\H(X,X)&=&B^\H(X,X)+B^\H(Y,Y)\\
&=&\V (\nab XX+\nab YY)\\
&=&0.
\end{eqnarray*}
Since the second fundamental form $B^\H$ of the horizontal
distribution $\H$ is symmetric the polar identity tells us
that $B^\H\equiv 0$, so $\F$ is Riemannian.

It is easily seen from the above calculations that the other
part of the statement is also valid.
\end{proof}

\section{Examples}

Let $G$ be a 4-dimensional Lie group equipped with a left-invariant
Riemannian metric.  Let $\g$ be the Lie algebra of $G$ and
$\{X,Y,Z,W\}$ be an orthonormal basis for $\g$.  Let $Z,W\in\g$
generate a 2-dimensional left-invariant and integrable distribution $\V$
on $G$ which is conformal and with minimal leaves.  We denote
by $\H$ the horizontal distribution, orthogonal to $\V$, generated by
$X,Y\in\g$.  Then it is easily seen that the basis $\{ X,Y,Z,W\}$ can
be chosen so that the Lie bracket relations for $\g$ are of the form
\begin{eqnarray*}\label{system}
\lb WZ&=&\lambda W,\\
\lb ZX&=&\alpha X +\beta Y+z_1 Z+w_1 W,\\
\lb ZY&=&-\beta X+\alpha Y+z_2 Z+w_2 W,\\
\lb WX&=&     a X     +b Y+z_3 Z-z_1W,\\
\lb WY&=&    -b X     +a Y+z_4 Z-z_2W,\\
\lb YX&=&     r X         +\theta_1 Z+\theta_2 W
\end{eqnarray*}
with real coefficients.  It should be noted that these constants must
be chosen in such a way that the Lie brackets for $\g$ satisfy the
Jacobi identity.  The solutions to that problem were recently classified
in \cite{Gud-Sve-6}. The following easy result describes the geometry
of the situation.

\begin{proposition}\label{prop-geometry}
Let $G$ be a 4-dimensional Lie group and $\{X,Y,Z,W\}$ be an
orthonormal basis for its Lie algebra as  above.  Then
\begin{enumerate}
\item[(i)] $\F$ is {\it totally geodesic} if and only if
  $z_1=z_2=z_3+w_1=z_4+w_2=0$,
\item[(ii)] $\F$ is {\it Riemannian} if and only if $\alpha=a=0$, and
\item[(iii)] $\H$ is {\it integrable} if and only if $\theta_1=\theta_2=0$.
\end{enumerate}
\end{proposition}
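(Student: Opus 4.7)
The plan is to compute the Levi-Civita connection of the left-invariant metric directly from the given Lie bracket relations via the Koszul formula, then read off the three conditions by inspection. Since all vectors in play are left-invariant, every inner product $\langle U,V\rangle$ is a constant, so the Koszul formula reduces to
\[
2\langle\nab U V,T\rangle=\langle\lb UV,T\rangle+\langle\lb TU,V\rangle+\langle\lb TV,U\rangle,
\]
and each of the three claims becomes a finite table of such evaluations.

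Part (iii) is essentially free: since $\lb YX=rX+\theta_1Z+\theta_2W$, one has $\V\lb XY=-\theta_1Z-\theta_2W$, which vanishes exactly when $\theta_1=\theta_2=0$. For part (ii), I would apply the Koszul identity to $\langle\nab XX,Z\rangle$, $\langle\nab XX,W\rangle$, $\langle\nab YY,Z\rangle$, $\langle\nab YY,W\rangle$ and to the symmetrised terms $\langle\nab XY+\nab YX,Z\rangle$, $\langle\nab XY+\nab YX,W\rangle$. Using the bracket relations, the diagonal entries collapse to $\alpha$ and $a$ (giving $\V\nab XX=\V\nab YY=\alpha Z+aW$), while the off-diagonal symmetrised terms cancel identically because the contributions from $\lb ZX$ and $\lb ZY$ (resp.\ $\lb WX$ and $\lb WY$) appear with opposite signs. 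Hence the conformal vector field prescribed by $B^\H=g\otimes V$ is $V=\alpha Z+aW$, and $\F$ is Riemannian precisely when $\alpha=a=0$.

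Part (i) is the same kind of computation performed on the vertical side. I would compute $\H\nab ZZ$ by evaluating $\langle\nab ZZ,X\rangle$ and $\langle\nab ZZ,Y\rangle$; the Koszul formula reduces these to $-z_1$ and $-z_2$. The analogous calculation for $\H\nab WW$ produces $z_1$ and $z_2$ (note the minimality condition $\trace B^\V=0$ is then automatic, as it should be). For the mixed term, $\langle\nab ZW+\nab WZ,X\rangle$ picks up contributions from $\lb ZX$ and $\lb WX$ giving $-(w_1+z_3)$, and likewise the $Y$-component is $-(w_2+z_4)$. Hence $B^\V\equiv0$ is equivalent to the four scalar equations $z_1=z_2=0$ and $z_3+w_1=z_4+w_2=0$.

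The only real obstacle is careful sign bookkeeping in the Koszul computations, since several terms coming from symmetric pairs of brackets must be seen to either reinforce or cancel; there is no conceptual difficulty beyond organising the table of $\langle\nab{\cdot}{\cdot},\cdot\rangle$ entries. Once this table is written down, the three equivalences of the proposition are read off directly from the respective rows.
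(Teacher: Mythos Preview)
Your proposal is correct and is exactly the approach the paper has in mind. The paper actually omits the proof of this proposition, calling it an ``easy result,'' but immediately afterwards (in the proof of the next lemma) it invokes the same left-invariant Koszul formula and records $\nab XX=rY+\alpha Z+aW$, $\nab YY=\alpha Z+aW$, $\nab ZZ=-z_1X-z_2Y$, $\nab WW=z_1X+z_2Y-\lambda Z$, which are precisely the entries your table produces; the three equivalences then drop out as you describe.
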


The following lemma turns out to be useful later on.

\begin{lemma}
For the above situation we have the following:
\begin{itemize}
\item[i.] The almost Hermitian structure $J_1$ is
cosymplectic if and only if $$\theta_1-2a=0\ \ \text{and}\ \  \theta_2+2\alpha=0.$$
\item[ii.] The almost Hermitian structure $J_2$ is
cosymplectic if and only if $$\theta_1+2a=0\ \ \text{and}\ \  \theta_2-2\alpha=0.$$
\end{itemize}
\end{lemma}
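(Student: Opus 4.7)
The plan is to exploit the divergence formula that was derived in the proof of Theorem \ref{theo-main-1}, namely
\[
\delta J_k=[X,Y]+(-1)^k[W,Z]-J_k\bigl(\nab XX+\nab YY+\nab ZZ+\nab WW\bigr),
\]
valid for any local orthonormal frame $\{X,Y,Z,W\}$ adapted to $TM=\V\oplus\H$. Since the inner products of left-invariant vector fields are constants, Koszul's formula reduces to
\[
2\ip{\nab AB}C=\ip{\lb AB}C+\ip{\lb CA}B+\ip{\lb CB}A,
\]
so in particular $\ip{\nab AA}{C}=\ip{\lb CA}A$ for any basis vector $C$. I would apply this to each of the four basis vectors and read the components directly from the bracket table.

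Concretely, I would compute
\[
\nab XX=rY+\alpha Z+aW,\qquad \nab YY=\alpha Z+aW,
\]
\[
\nab ZZ=-z_1X-z_2Y,\qquad \nab WW=z_1X+z_2Y-\lambda Z,
\]
which cancels nicely to give
\[
\nab XX+\nab YY+\nab ZZ+\nab WW=rY+(2\alpha-\lambda)Z+2aW.
\]
Combined with $[X,Y]=-rX-\theta_1Z-\theta_2W$ and $[W,Z]=\lambda W$, one plugs into the boxed formula and applies $J_1$ (resp.\ $J_2$) using $J_1Y=-X$, $J_1Z=W$, $J_1W=-Z$ (resp.\ $J_2Z=-W$, $J_2W=Z$). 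The $X$ and $Y$ components cancel automatically thanks to the $rX$ terms, so that the result collapses to a purely vertical vector
\[
\delta J_1=(2a-\theta_1)Z-(2\alpha+\theta_2)W,\qquad
\delta J_2=-(2a+\theta_1)Z+(2\alpha-\theta_2)W,
\]
and the two claimed equivalences follow by setting the coefficients equal to zero.

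No genuine obstacle is expected; the proof is a direct calculation. The only step where one has to be a little careful is getting the signs right when applying $J_1$ and $J_2$ to $Z$ and $W$, and making sure that the $\lambda$ contributions from $[W,Z]$ and from $(2\alpha-\lambda)Z$ cancel correctly (indeed $J_k(2\alpha-\lambda)Z$ produces a term that exactly kills the $\lambda W$ coming from $\pm[W,Z]$), so that the final answer depends only on $a,\alpha,\theta_1,\theta_2$ as stated.
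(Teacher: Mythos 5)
Your proposal is correct and follows essentially the same route as the paper: it uses the Koszul formula for the left-invariant metric to get $\nabla_XX=rY+\alpha Z+aW$, $\nabla_YY=\alpha Z+aW$, $\nabla_ZZ=-z_1X-z_2Y$, $\nabla_WW=z_1X+z_2Y-\lambda Z$, and then substitutes into the divergence identity $\delta J_k=\lb XY+(-1)^k\lb WZ-J_k(\nabla_XX+\nabla_YY+\nabla_ZZ+\nabla_WW)$ to obtain exactly the paper's expressions $\delta J_1=-(\theta_1-2a)Z-(\theta_2+2\alpha)W$ and the analogous one for $J_2$. The computations, including the cancellation of the $\lambda$-terms, check out.
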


\begin{proof}
A standard calculation involving the Koszul fomula
$$2\ip{\nab XY}Z=\ip{\lb ZX}Y+\ip{\lb ZY}X+\ip Z{\lb XY}$$
shows that for the Levi-Civita connection of $(G,g)$ we have
$$\nab XX=rY+\alpha Z+aW,\ \ \nab YY=\alpha Z+aW,$$
$$\nab ZZ=-z_1X-z_2Y,\ \ \nab WW=z_1X+z_2Y-\lambda Z.$$
Then the divergence of the almost complex structure $J_1$ is given by
\begin{eqnarray*}
\delta J_1&=&\lb XY-\lb WZ-J_1(\nab XX+\nab YY+\nab ZZ+\nab WW)\\
&=&-(\theta_1-2a)Z-(\theta_2+2\alpha)W.
\end{eqnarray*}
This proves i. and ii. is obtained in exactly the same way.
\end{proof}

In the case when $\lambda=0$, $r\neq 0$ and $(a\beta-\alpha b)\neq 0$
the solutions are given by the following 5-dimensional family
$\g_{5}(\alpha,a,\beta,b,r)$, see Case (C) of \cite{Gud-Sve-6}.

\begin{example}[$\g_{5}(\alpha,a,\beta,b,r)$]\label{exam-C1}
\begin{eqnarray*}
\lb ZX&=&\alpha X +\beta Y
+\frac{r(\beta b-\alpha a)}{2(a\beta-\alpha b)} Z+\frac{r(\alpha^2-\beta^2)}{2(a\beta-\alpha b)} W,\\
\lb ZY&=&-\beta X+\alpha Y
+\frac{r(\alpha b+\beta a)}{2(a\beta-\alpha b)} Z-\frac{r\alpha\beta}{(a\beta-\alpha b)} W,\\
\lb WX&=&     a X     +b Y
+\frac{r(b^2-a^2)}{2(a\beta-\alpha b)} Z+\frac{r(\alpha a-\beta b)}{2(a\beta-\alpha b)}W,\\
\lb WY&=&    -b X     +a Y
+\frac{rab }{(a\beta-\alpha b)} Z-\frac{r(\alpha b+\beta a)}{2(a\beta-\alpha b)}W,\\
\lb YX&=&     r X
-\frac{ar^2}{2(a\beta-\alpha b)} Z+\frac{\alpha r^2}{2(a\beta-\alpha b)} W.
\end{eqnarray*}
Since $r\neq 0$ and $\alpha^2+a^2\neq 0$ each of the induced foliations
$\F$ is neither Riemannian nor does it have an integrable horizontal distribution.
This tells us that at most one of the almost Hermitian structures is
cosymplectic.
\end{example}

In the case when $\lambda=0$, $r=0$ and $\alpha b-a\beta =0$ we have
several interesting families of solutions, see Case (F) of \cite{Gud-Sve-6}.
Two of those are presented below.

\begin{example}
If we assume $\alpha=a=0$ and $\beta\neq 0\neq b$ then we obtain the family $\g_{18}(\beta,b,z_3,z_4,\theta_1,\theta_2)$ of the following form
\begin{eqnarray*}
\lb ZX&=& \beta Y+\frac {\beta z_3}b Z-\frac {\beta^2z_3}{b^2} W,\\
\lb ZY&=&-\beta X+\frac {\beta z_4}b Z-\frac {\beta^2z_4}{b^2} W,\\
\lb WX&=& b Y+z_3 Z-\frac {\beta z_3}b W,\\
\lb WY&=&-b X+z_4 Z-\frac {\beta z_4}b W,\\
\lb YX&=&\theta_1 Z+\theta_2 W.
\end{eqnarray*}
The corresponding foliations $\F$ are all Riemannian so the almost Hermitian
structures $J_1$ and $J_2$ are both cosymplectic if and only if the horizontal
distribution $\H$ is integrable i.e. $\theta_1^2=\theta_2^2=0$.
\end{example}

\begin{example}
If we assume $\alpha\neq 0\neq a$ and $\beta\neq 0\neq b$ then we get the family
$\g_{20}(\alpha,a,\beta,w_1,w_2)$ of the following form
\begin{eqnarray*}
\lb ZX&=&\alpha X+\beta Y-\frac {aw_1}\alpha Z+w_1 W,\\
\lb ZY&=&-\beta X+\alpha Y-\frac {aw_2}\alpha Z+w_2 W,\\
\lb WX&=&a X+\frac{\beta a}\alpha Y-\frac {a^2w_1}{\alpha^2} Z+\frac a\alpha w_1 W,\\
\lb WY&=&-\frac{\beta a}\alpha X+a Y-\frac {a^2w_2}{\alpha^2} Z+\frac a\alpha w_2 W.
\end{eqnarray*}
The almost Hermitian structure $J_1$ is cosymplectic if and only if
$2\alpha^2+aw_2=0$. The same applies to $J_2$ if and only if $2\alpha^2-aw_2=0$.
It is clear that in none of the cases is the foliation $\F$ Riemannian.  The
horizontal distribution $\H$ is integrable in all the cases.
\end{example}

\section{Integrable almost Hermitian structures}

We conclude this paper with Theorem \ref{theo-main-2} giving 
another relationship between the geometry of the horizontal 
conformal foliation $\F$ and conditions on the almost Hermitian 
structure $J_1$ and $J_2$.  The result follows from Lemma 3.6 
(iii) and Proposition 3.9 of \cite{Woo-1992}, but here we give 
a more direct proof.

\begin{theorem}\label{theo-main-2}
Let $(M^4,g)$ be a 4-dimensional orientable Riemannian manifold equipp\-ed
with a minimal and conformal foliation $\F$ of codimension 2.  Then the
corresponding adapted almost Hermitian structures $J_1$ and $J_2$ are both
integrable if and only if $\F$ is totally geodesic.
\end{theorem}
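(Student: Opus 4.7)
The strategy mirrors that of Theorem~\ref{theo-main-1}: the plan is to compute an invariant of each $J_k$---now the Nijenhuis tensor $N_{J_k}$ rather than the divergence $\delta J_k$---in the adapted local orthonormal frame $\{X,Y,Z,W\}$, then separate horizontal and vertical parts and match them against the foliation hypotheses. A short direct calculation shows $N_{J_k}(X,Y)=N_{J_k}(Z,W)=0$ automatically for $k=1,2$, so by the tensorial identity $N_{J_k}(J_kA,B)=-J_kN_{J_k}(A,B)$ the integrability of $J_k$ reduces to the single condition $N_{J_k}(X,Z)=0$.

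The core step is to expand
\begin{align*}
N_{J_1}(X,Z) &= -[X,Z]+[Y,W]-J_1([X,W]+[Y,Z]), \\
N_{J_2}(X,Z) &= -[X,Z]-[Y,W]+J_2([X,W]-[Y,Z])
\end{align*}
and to take their sum and difference. Because $J_1=J_2$ on $\H$ and $J_1=-J_2$ on $\V$, these combinations decouple into four obstructions: two purely horizontal and two purely vertical. I expect the horizontal obstructions $\H[X,Z]+J_1\H[Y,Z]=0$ and $\H[Y,W]-J_1\H[X,W]=0$ to hold automatically from the conformality of $\F$: a short Koszul computation shows that conformality of the foliation is equivalent to $\H[Z,\cdot]|_\H$ and $\H[W,\cdot]|_\H$ being of the form $\alpha I+\beta J_1|_\H$, which immediately yields $\H[Y,Z]=J_1\H[X,Z]$ and $\H[Y,W]=J_1\H[X,W]$.

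For the two vertical obstructions, I would decompose $\V[A,Z]$ and $\V[A,W]$ for $A\in\H$ using $[A,B]=\nabla_A B-\nabla_B A$ together with metric compatibility. Each expression contains a twisting term involving $\omega(A):=g(\nabla_A Z,W)$, but these $\omega$-terms cancel in the combinations $\V[X,Z]+J_1\V[X,W]$ and $\V[Y,W]-J_1\V[Y,Z]$, leaving the conditions
$$g(A,\H\nabla_Z Z-\H\nabla_W W)=0 \quad\text{and}\quad g(A,\H\nabla_Z W+\H\nabla_W Z)=0$$
for every $A\in\H$. Combined with the minimality hypothesis $\H\nabla_Z Z+\H\nabla_W W=0$, these force $\H\nabla_Z Z=\H\nabla_W W=0$ and $\H\nabla_Z W+\H\nabla_W Z=0$, i.e.\ $B^\V\equiv 0$, so $\F$ is totally geodesic. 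The converse direction is then immediate: if $\F$ is totally geodesic, $B^\V=0$ kills the vertical obstructions while conformality kills the horizontal ones.

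The main obstacle is the vertical bookkeeping: one must verify that the twisting function $\omega(A)$ cancels cleanly in the two key combinations and that the remaining scalar equations line up exactly with the three components of $B^\V$ once the minimality reduction is imposed. Everything else is formal manipulation of the Nijenhuis formula and exploitation of the complementary behaviour of $J_1$ and $J_2$ on the two summands of $TM=\H\oplus\V$.
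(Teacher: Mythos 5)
Your proposal is correct and follows essentially the same route as the paper: compute the Nijenhuis tensors of $J_1$ and $J_2$ in the adapted frame, note that the horizontal parts vanish precisely by conformality, and identify the vertical parts of $N_1\pm N_2$ with the trace-free and off-diagonal components of $B^\V$ (the paper's 1-forms $\alpha$ and $\beta$), which together with minimality give $B^\V\equiv 0$. Your only streamlining is using the identity $N(J_kA,B)=-J_kN(A,B)$ to reduce everything to the single component $N_{J_k}(X,Z)$, whereas the paper tabulates all four mixed components; the substance is the same.
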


\begin{proof}
Here we use the same notation as in Section \ref{section-3}.  
For $k=1,2$ the skew-symmetric Nijenhuis tensor $N_k$ of the 
almost Hermitian structure $J_k$ is given by
$$N_k(E,F)=[E,F]+J_k[J_kE,F]+J_k[E,J_kF]-[J_kE,J_kF].$$
It is easily seen that this satisfies
$$N_k(X,Y)=N_k(Z,W)=0$$
and that the horizontal conformality of $\F$ is equivalent to 
$$\H N_k(X,Z)=\H N_k(X,W)=\H N_k(Y,Z)=\H N_k(Y,W)=0.$$
This means that $J_1$ and $J_2$ are both integrable if and only if for $k=1,2$
$$\V N_k(X,Z)=\V N_k(X,W)=\V N_k(Y,Z)=\V N_k(Y,W)=0.$$
We define the 1-forms $\alpha,\beta:C^\infty (\H)\to\rn$ by
$$\alpha(E)=2\ip{B^\V(Z,Z)-B^\V(W,W)}E,$$
$$\beta(E)=2\ip{B^\V(Z,W)+B^\V(W,Z)}E.$$
Since we are assuming that that the fibres are minimal i.e.
$$B^\V(Z,Z)+B^\V(W,W)=0$$
we see that the foliation $\F$ is totally geodesic if and only if
$\alpha$ and $\beta$ vanish.
Now a standard calculation shows that
$$\ip{(N_1+N_2)(X,Z)}Z=\alpha(X)=-\ip{(N_1+N_2)(X,W)}W,$$
$$-\ip{(N_1-N_2)(X,Z)}Z=\beta(Y)=\ip{(N_1-N_2)(X,W)}W,$$

$$\ip{(N_1+N_2)(X,Z)}W=\beta(X)=\ip{(N_1+N_2)(X,W)}Z,$$
$$\ip{(N_1-N_2)(X,Z)}W=\alpha(Y)=\ip{(N_1-N_2)(X,W)}Z,$$

$$\ip{(N_1+N_2)(Y,Z)}Z=\alpha(Y)=-\ip{(N_1+N_2)(Y,W)}W,$$
$$\ip{(N_1-N_2)(Y,Z)}Z=\beta(X)=-\ip{(N_1-N_2)(Y,W)}W,$$

$$\ip{(N_1+N_2)(Y,Z)}W=\beta(Y)=\ip{(N_1+N_2)(Y,W)}Z,$$
$$\ip{(N_1-N_2)(Y,Z)}W=-\alpha(X)=\ip{(N_1-N_2)(Y,W)}Z,$$
The statement of Theorem \ref{theo-main-2} is a direct consequence of 
these equations.

\end{proof}

\end{document}